\numberwithin{equation}{section}
\theoremstyle{definition}
\numberwithin{equation}{section}
\newcommand{\ncom}{\newcommand}
\ncom{\beq}{\begin{equation}}
\ncom{\eeq}{\end{equation}}
\ncom{\bea}{\begin{eqnarray*}}
\ncom{\eea}{\end{eqnarray*}}
\ncom{\beqa}{\begin{eqnarray}}
\ncom{\eeqa}{\end{eqnarray}}
\ncom{\nno}{\nonumber}
\ncom{\non}{\nonumber}
\ncom{\ds}{\displaystyle}
\ncom{\half}{\frac{1}{2}}
\ncom{\mbx}{\makebox{.25cm}}
\ncom{\hs}{\mbox{\hspace{.25cm}}}
\ncom{\rar}{\rightarrow}
\ncom{\Rar}{\Rightarrow}
\ncom{\noin}{\noindent}
\ncom{\bc}{\begin{center}}
\ncom{\ec}{\end{center}}
\ncom{\sz}{\scriptsize}
\ncom{\rf}{\ref}
\ncom{\s}{\sqrt{2}}
\ncom{\sgm}{\sigma}
\ncom{\Sgm}{\Sigma}
\ncom{\psgm}{\sigma^{\prime}}
\ncom{\dt}{\delta}
\ncom{\Dt}{\Delta}
\ncom{\lmd}{\lambda}
\ncom{\Lmd}{\Lambda}
\ncom{\Th}{\Theta}
\ncom{\e}{\eta}
\ncom{\eps}{\epsilon}
\ncom{\pcc}{\stackrel{P}{>}}
\ncom{\lp}{\stackrel{L_{p}}{>}}
\ncom{\dist}{{\rm\,dist}}
\ncom{\sspan}{{\rm\,span}}
\ncom{\re}{{\rm Re\,}}
\ncom{\im}{{\rm Im\,}}
\ncom{\sgn}{{\rm sgn\,}}
\ncom{\ba}{\begin{array}}
\ncom{\ea}{\end{array}}
\ncom{\hone}{\mbox{\hspace{1em}}}
\ncom{\htwo}{\mbox{\hspace{2em}}}
\ncom{\hthree}{\mbox{\hspace{3em}}}
\ncom{\hfour}{\mbox{\hspace{4em}}}
\ncom{\vone}{\vskip 2ex}
\ncom{\vtwo}{\vskip 4ex}
\ncom{\vonee}{\vskip 1.5ex}
\ncom{\vthree}{\vskip 6ex}
\ncom{\vfour}{\vspace*{8ex}}
\ncom{\norm}{\|\;\;\|}
\ncom{\integ}[4]{\int_{#1}^{#2}\,{#3}\,d{#4}}
\ncom{\vspan}[1]{{{\rm\,span}\{ #1 \}}}
\ncom{\dm}[1]{ {\displaystyle{#1} } }
\ncom{\ri}[1]{{#1} \index{#1}}
\newtheorem{theorem}{\bf Theorem}[section]
\newtheorem{remark}{\bf Remark}[section]
\newtheorem{proposition}{Proposition}[section]
\newtheorem{lemma}{Lemma}[section]
\newtheorem{corollary}{Corollary}[section]
\newtheorem{definition}{Definition}[section]
\newtheoremstyle
    {remarkstyle}
    {}
    {11pt}
    {}
    {}
    {\bfseries}
    {:}
    {     }
    {\thmname{#1} \thmnumber{#2} }
\theoremstyle{remarkstyle}
\def\eps{\varepsilon}
\begin{document}
\title{Generalized Fractional Counting Process}
\author[Kuldeep Kumar Kataria]{Kuldeep Kumar Kataria}
\address{Kuldeep Kumar Kataria, Department of Mathematics, Indian Institute of Technology Bhilai, Raipur 492015, India.}
\email{kuldeepk@iitbhilai.ac.in}
\author[Mostafizar Khandakar]{Mostafizar Khandakar}
\address{Mostafizar Khandakar, Department of Mathematics, Indian Institute of Technology Bhilai, Raipur 492015, India.}
\email{mostafizark@iitbhilai.ac.in}
\subjclass[2010]{Primary : 60G55; Secondary: 60G22, 91B30}
\keywords{ Poisson process of order $k$; fractional P\'olya-Aeppli process; LRD property; SRD property.}
\date{June 22, 2021}
\begin{abstract}
 In this paper, we obtain additional results for a fractional counting process introduced and studied by Di Crescenzo {\it et al.} (2016). For convenience, we call it the generalized fractional counting process (GFCP). It is shown that the one-dimensional distributions of the GFCP are not infinitely divisible. Its covariance structure is studied using which its long-range dependence property is established. It is shown that the increments of GFCP exhibits the  short-range dependence property. Also, we prove that the GFCP is a scaling limit of some continuous time random walk. A particular case of the GFCP, namely, the generalized counting process (GCP) is discussed for which we obtain  a limiting result, a martingale result and establish a recurrence relation for its probability mass function. We have shown that many known counting processes such as the Poisson process of order $k$, the P\'olya-Aeppli process of order $k$, the negative binomial process and their fractional versions {\it etc.} are other special cases of the GFCP. An application of the GCP to risk theory is discussed. 
\end{abstract}

\maketitle
\section{Introduction}
The time fractional Poisson process (TFPP) is a renewal process with heavy-tailed distributed waiting times (see Laskin (2003), Beghin and Orsingher (2009)). Biard and  Saussereau (2014) showed that its increment process exhibits the long-range dependence (LRD) property and discussed its applications in risk theory. The processes that exhibit the LRD property has applications in several areas, for example, finance (see Ding {\it et al.} (1993)), hydrology (see Doukhan {\it et al.} (2003), pp. 461-472), internet data traffic modeling (see Karagiannis {\it et al.} (2004)) {\it etc.} For other fractional versions of the Poisson process we refer the reader to Beghin (2012), Orsingher and Polito (2012), Kataria and Vellaisamy (2017) {\it etc.} and references therein.

Let $\{M^\alpha(t)\}_{t\geq0}$, $0<\alpha\leq 1$ be a fractional counting process which performs $k$ kinds of jumps of amplitude $1,2,\dots,k$ with positive rates $\lambda_{1},\lambda_{2},\ldots,\lambda_{k}$, respectively, where $k\geq1$ is a fixed integer and whose state probabilities $p^\alpha(n,t)=\mathrm{Pr}\{M^\alpha(t)=n\}$ satisfy
\begin{equation}\label{cre}
\frac{\mathrm{d}^{\alpha}}{\mathrm{d}t^{\alpha}}p^\alpha(n,t)=-\Lambda p^\alpha(n,t)+	\sum_{j=1}^{\min\{n,k\}}\lambda_{j}p^\alpha(n-j,t),\ \ n\ge0,
\end{equation}
with
\begin{equation*}
p^\alpha(n,0)=\begin{cases}
1,\ \ n=0,\\
0,\ \ n\ge1.
\end{cases}
\end{equation*}
Here, $\Lambda=\lambda_{1}+\lambda_{2}+\dots+\lambda_{k}$ and $\dfrac{\mathrm{d}^{\alpha}}{\mathrm{d}t^{\alpha}}$ is the Caputo fractional derivative which is defined as
\begin{equation*}
\frac{\mathrm{d}^{\alpha}}{\mathrm{d}t^{\alpha}}f(t)\coloneqq\begin{cases}
\dfrac{1}{\Gamma \left( 1-\alpha \right)}\displaystyle\int_{0}^{t}(t-s)^{-\alpha}f^{\prime}(s)\,\mathrm{d}s,\ \ 0<\alpha<1,\vspace*{.2cm}\\
f^{\prime}(t), \ \ \alpha=1.
\end{cases}
\end{equation*}

The process $\{M^\alpha(t)\}_{t\geq0}$ is introduced and studied by Di Crescenzo {\it et al.} (2016). Throughout this paper, we call it the generalized fractional counting process (GFCP). Its probability mass function (pmf) is given by (see Di Crescenzo {\it et al.} (2016))
\begin{equation}\label{pmfmt}
p^{\alpha}(n,t)
=\sum_{r=0}^{n}\, \sum_{\substack{i_{1}+i_{2}+\ldots+i_{k}=r\\ i_{1}+2i_{2}+\ldots+ki_{k} =n}}\binom{r}{i_{1},i_{2},\ldots,i_{k}}\, \lambda _{1}^{i_{1}}\lambda _{2}^{i_{2}}\ldots\lambda _{k}^{i_{k}}\, t^{r\alpha}E_{\alpha,r\alpha +1 }^{r+1 }(-\Lambda t^{\alpha}),\ \ n\ge0,
\end{equation}
where $i_1,i_2,\ldots,i_k$ are non-negative integers and $E_{\alpha,r\alpha +1 }^{r+1 }(\cdot)$ is the three-parameter Mittag-Leffler function defined in (\ref{mit}). Its mean and variance are given by 
\begin{equation}\label{mean-variance}
\mathbb{E}\left(M^{\alpha}(t)\right)=St^{\alpha},\ \ \operatorname{ Var}\left(M^{\alpha}(t)\right)=Rt^{2\alpha}+Tt^{\alpha},
\end{equation}
where 
\begin{equation*}
S=\frac{\sum_{j=1}^{k}j\lambda_{j}}{\Gamma(\alpha+1)},\ \ R=\left(\frac{2}{\Gamma(2\alpha+1)}-\frac{1}{\Gamma^2(\alpha+1)}\right)\left(\sum_{j=1}^{k}j\lambda_{j}\right)^{2},\  T=\frac{\sum_{j=1}^{k}j^{2}\lambda_{j}}{\Gamma(\alpha+1)}.
\end{equation*}

For $k=1$, the GFCP reduces to TFPP. A limiting case of the GFCP, namely, the convoluted fractional Poisson process (CFPP) which is obtained by taking suitable $\lambda_{j}$'s and letting $k\to\infty$ is studied by Kataria and Khandakar (2021). For $\alpha=1$, the GFCP reduces to a special case, namely, the generalized counting process (GCP). We discuss the GCP in detail later in the paper.

Di Crescenzo {\it et al.} (2016) showed that
\begin{equation}\label{compound}
M^\alpha(t)\overset{d}{=}\sum_{i=1}^{N^\alpha(t)}X_{i},\ \ t\ge0,
\end{equation}
where $\overset{d}{=}$ denotes equal in distribution. Here, $\{N^\alpha(t)\}_{t\ge0}$ is the TFPP with intensity parameter $\Lambda$ which is independent of the sequence of independent and identically distributed (iid) random variables $\{X_{i}\}_{i\ge1}$ such that
\begin{equation}\label{prxi=j}
\mathrm{Pr}\{X_{1}=j\}=\frac{\lambda_{j}}{\Lambda},\ \ j=1,2,\dots,k.
\end{equation}

It is also known that
\begin{equation}\label{mya}
M^\alpha(t)\stackrel{d}{=}M(Y_{\alpha}(t)),
\end{equation}
where the inverse $\alpha$-stable subordinator $\{Y_{\alpha}(t)\}_{t\ge0}$ is independent of the GCP $\{M(t)\}_{t\geq0}$.

In this paper, we study some additional results for the GFCP and for its  special case, the GCP. In Section \ref{section2}, some known results on the Mittag-Leffler function and the inverse $\alpha$-stable subordinator are provided. In Section \ref{section3}, we obtain the  characteristic function and the L\'evy measure of GCP. It is shown that the process $\{M(t)-\sum_{j=1}^{k}j\lambda_{j}t\}_{t\geq0}$ is a martingale with respect to a suitable filtration. The following recurrence relation for the pmf $p(n,t)=\mathrm{Pr}\{M(t)=n\}$ of GCP is obtained:
\begin{equation*}
p(n,t)=\frac{t}{n}\sum_{j=1}^{\min\{n,k\}}j\lambda_{j}p(n-j,t),  \ \ n\ge1.	
\end{equation*}
Also, we have shown that
\begin{equation*}
\lim_{t\to\infty}\frac{M(t)}{t}=\sum_{j=1}^{k}j\lambda_{j},\ \ \text{in probability}.
\end{equation*}	
The above limiting result is used to show that the one-dimensional distributions of GFCP are not infinitely divisible. The explicit expressions for the probability generating function (pgf) and the $r$th factorial moment of GFCP are obtained. Its LRD property is established by utilizing its covariance. Also, it is shown that the increments of GFCP has the short-range dependence (SRD) property. We discuss a continuous time random walk (CTRW) whose scaling limit is the GFCP. 

In Section \ref{section4}, it is shown that some known counting processes such as the Poisson process of order $k$, the P\'olya-Aeppli process of order $k$, the negative binomial process and their fractional versions {\it etc.} are special cases of the GFCP. Some results for these particular as well as limiting cases are obtained. 

In Section \ref{section5}, we considered a risk model in which the GCP is used to model the number of claims received. The governing differential equation for the joint probability of the time to ruin and the deficit at the time of ruin is derived for the introduced risk model. The closed form expression for its ruin probability with no initial capital is obtained.

\section{Preliminaries}\label{section2}
Here, we provide some known results related to Mittag-Leffler function and
 inverse $\alpha$-stable subordinator. These results will be required later.
\subsection{Mittag-Leffler function}
The three-parameter Mittag-Leffler function is defined as
\begin{equation}\label{mit}
	E_{\beta, \gamma}^{\delta}(x)\coloneqq\frac{1}{\Gamma(\delta)}\sum_{k=0}^{\infty} \frac{\Gamma(\delta+k)x^{k}}{k!\Gamma(k\beta+\gamma)},\ \ x\in\mathbb{R},
\end{equation}
where $\beta>0$, $\gamma>0$ and $\delta>0$. It reduces to the two-parameter and the one-parameter Mittag-Leffler function for $\delta=1$ and $\delta=\gamma=1$, respectively. The following holds true (see Kilbas {\it et al.} (2006), Eq. (1.8.22)):
\begin{equation}\label{re}
	E_{\beta, \gamma}^{(n)}(x)=n!E_{\beta, n\beta+\gamma}^{n+1}(x),\ \ n\ge0.
\end{equation}
where $E_{\beta, \gamma}^{(n)}(\cdot)$ denotes the $n$th derivative of two-parameter Mittag-Leffler function.
\subsection{Inverse $\alpha$-stable subordinator}
A $\alpha$-stable subordinator $\{D_{\alpha}(t)\}_{t\ge0}$, $0<\alpha<1$ is a non-decreasing L\'evy process. Its Laplace transform is given by $\mathbb{E}\left(e^{-sD_{\alpha}(t)}\right)=e^{-ts^{\alpha}}$, $s>0$. Its first passage time $\{Y_{\alpha}(t)\}_{t\ge0}$ is called the inverse $\alpha$-stable subordinator and it is defined as 
\begin{equation*}
Y_{\alpha}(t)=\inf\{x>0: D_{\alpha}(x)> t\}.
\end{equation*}

The mean of $Y_{\alpha}(t)$ is given by (see Leonenko {\it et al.} (2014))
\begin{equation}\label{xswswa3}
\mathbb{E}\left(Y_{\alpha}(t)\right)=\frac{t^{\alpha}}{\Gamma(\alpha+1)}.
\end{equation}

Let $B(\alpha,\alpha+1)$ and $B(\alpha,\alpha+1;s/t)$ denote the beta function and  the incomplete beta function, respectively. It is known that (see Leonenko {\it et al.} (2014), Eq. (10))
\begin{equation}\label{covvdsw}
\operatorname{Cov}\left(Y_{\alpha}(s),Y_{\alpha}(t)\right)=\frac{1}{\Gamma^2(\alpha+1)}\left( \alpha s^{2\alpha}B(\alpha,\alpha+1)+F(\alpha;s,t)\right),
\end{equation}
where $0<s\leq t$ and $F(\alpha;s,t)=\alpha t^{2\alpha}B(\alpha,\alpha+1;s/t)-(ts)^{\alpha}$. On using the following asymptotic result (see Maheshwari and  Vellaisamy (2016), Eq. (8)):
\begin{equation*}
F(\alpha;s,t)\sim \frac{-\alpha^{2}}{(\alpha+1)}\frac{s^{\alpha+1}}{t^{1-\alpha}},\ \ \mathrm{as}\ \ t\to\infty,
\end{equation*}
in (\ref{covvdsw}), we get the following result for fixed $s$ and large $t$:
\begin{equation}\label{asi}
\operatorname{Cov}\left(Y_{\alpha}(s),Y_{\alpha}(t)\right)\sim \frac{1}{\Gamma^{2}(\alpha+1)}\left(\alpha s^{2\alpha} B(\alpha,\alpha+1)-\frac{\alpha^{2}}{(\alpha+1)}\frac{s^{\alpha+1}}{t^{1-\alpha}}\right).
\end{equation}

\section{Generalized fractional counting process}\label{section3}
In this section, we obtain some additional results for the GFCP and its special case, the GCP. 

Di Crescenzo {\it et al.} (2016) showed that the GFCP $\{M^{\alpha}(t)\}_{t\geq0}$ is not a L\'evy process. However, it can be seen from (\ref{compound}) that for the case $\alpha=1$, {\it i.e.}, the GCP $\{M(t)\}_{t\geq0}$ is equal in distribution to a compound Poisson process which is a L\'evy process. Thus, the GCP is a L\'evy process and its characteristic function is given by
\begin{align}\label{ch}
\mathbb{E}\left(e^{i\xi M(t)}\right)&=\mathbb{E}\left(e^{i\xi\sum_{i=1}^{N(t)}X_{i}}\right),\ \ \xi\in\mathbb{R}\nonumber\\
&=\exp\left(-\mathbb{E}\left(N(t)\right)\left(1-\mathbb{E}\left(e^{i\xi X_{1}}\right)\right)\right)\nonumber\\
&=\exp\left(-t\sum_{j=1}^{k}(1-e^{i\xi j})\lambda_{j}\right),
\end{align}
where the last step follows from (\ref{prxi=j}). Also, its L\'evy measure is given by
\begin{equation}\label{qap675}
\Pi(\mathrm{d}x)=\sum_{j=1}^{k}\lambda_{j}\delta_{j}\mathrm{d}x,
\end{equation}
where $\delta_{j}$'s are Dirac measures.

The pgf $G^{\alpha}(u,t)=\mathbb{E}\left(u^{M^{\alpha}(t)}\right)$ of GFCP is given by
\begin{equation}\label{pgf}
G^{\alpha}(u,t)=E_{\alpha,1}\left(\sum_{j=1}^{k}\lambda_{j}(u^{j}-1)t^{\alpha}\right),\ \ |u|\le1,
\end{equation}
whose proof follows similar lines to that of Proposition 2.1, Di Crescenzo {\it et al.} (2016).

Next, we obtain a recurrence relation for the state probabilities of GCP.
\begin{proposition}
	The  state probabilities $p(n,t)$ of GCP satisfy
	\begin{equation*}
p(n,t)=\frac{t}{n}\sum_{j=1}^{\min\{n,k\}}j\lambda_{j}p(n-j,t),  \ \ n\ge1.	
	\end{equation*}
\end{proposition}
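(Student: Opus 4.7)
The plan is to derive the recurrence by manipulating the probability generating function (pgf) of the GCP, which is the $\alpha=1$ case of (\ref{pgf}). Since $E_{1,1}(x)=e^{x}$, the pgf of $\{M(t)\}_{t\geq 0}$ specializes to
\begin{equation*}
G(u,t)=\sum_{n=0}^{\infty}p(n,t)\,u^{n}=\exp\!\left(t\sum_{j=1}^{k}\lambda_{j}(u^{j}-1)\right),\ \ |u|\le 1.
\end{equation*}
I will differentiate both sides with respect to $u$ to pull a factor of $n$ onto $p(n,t)$, and then match coefficients of $u^{n}$.

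Differentiating the series representation gives $\partial_{u}G(u,t)=\sum_{n=1}^{\infty}n\,p(n,t)\,u^{n-1}$, while differentiating the exponential form and reusing $G(u,t)$ gives $\partial_{u}G(u,t)=G(u,t)\cdot t\sum_{j=1}^{k}j\lambda_{j}u^{j-1}$. Multiplying through by $u$ puts both sides in a clean power-series form:
\begin{equation*}
\sum_{n=1}^{\infty}n\,p(n,t)\,u^{n}=t\,\Big(\sum_{m=0}^{\infty}p(m,t)u^{m}\Big)\Big(\sum_{j=1}^{k}j\lambda_{j}u^{j}\Big).
\end{equation*}
Expanding the Cauchy product on the right and re-indexing with the new summation variable $n=m+j$ (so $m=n-j\ge 0$ forces $1\le j\le\min\{n,k\}$), the coefficient of $u^{n}$ on the right equals $t\sum_{j=1}^{\min\{n,k\}}j\lambda_{j}p(n-j,t)$. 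Equating coefficients of $u^{n}$ for $n\ge 1$ and dividing by $n$ yields the claimed recurrence.

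No real obstacle arises: the only mildly delicate point is the reindexing of the Cauchy product and making sure the upper limit $\min\{n,k\}$ comes out correctly, which it does because $\lambda_{j}=0$ for $j>k$ and $p(n-j,t)=0$ would require $j>n$. Since $G(u,t)$ is entire in $u$ (the exponential of a polynomial in $u$), all formal series manipulations are justified by absolute convergence, so comparing coefficients is legitimate.
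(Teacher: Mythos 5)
Your proof is correct and follows essentially the same route as the paper: differentiate the pgf $G(u,t)=\exp\bigl(t\sum_{j=1}^{k}\lambda_j(u^j-1)\bigr)$ to get $\partial_u G = tG\sum_{j=1}^{k}j\lambda_j u^{j-1}$ and compare coefficients. Your step of multiplying by $u$ and treating the right side as a single Cauchy product is a minor streamlining that avoids the paper's explicit case split into $1\le n\le k-1$ and $n\ge k$, but the argument is the same.
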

\begin{proof}
	From the definition of pgf, we get
	\begin{equation*}
	\frac{\mathrm{d}}{\mathrm{d}u}G(u,t)=\sum_{i=0}^{\infty}(i+1)p(i+1,t)u^{i}.
	\end{equation*}
On substituting $\alpha=1$ in (\ref{pgf}) and taking derivative, we get 
\begin{equation*}
\frac{\mathrm{d}}{\mathrm{d}u}G(u,t)=t\sum_{j=1}^{k}j\lambda_{j}u^{j-1}G(u,t).
\end{equation*}
On equating the above two equations, we get
\begin{align*}
\sum_{i=0}^{\infty}(i+1)p(i+1,t)u^{i}&=t\sum_{j=1}^{k}j\lambda_{j}u^{j-1}\sum_{i=0}^{\infty}p(i,t)u^i\\
&=t\sum_{j=1}^{k}j\lambda_{j}\sum_{i=j-1}^{\infty}p(i-j+1,t)u^{i}\\
&=t\sum_{j=1}^{k}j\lambda_{j}\left(\sum_{i=j-1}^{k-2}p(i-j+1,t)u^{i}+\sum_{i=k-1}^{\infty}p(i-j+1,t)u^{i}\right)\\
&=t\sum_{i=0}^{k-2}\sum_{j=1}^{i+1}j\lambda_{j}p(i-j+1,t)u^{i}+t\sum_{i=k-1}^{\infty}\sum_{j=1}^{k}j\lambda_{j}p(i-j+1,t)u^{i}.
\end{align*}
On equating the coefficients of $u^i$ for $0\le i\le k-2$, we get
\begin{equation*}
(i+1)p(i+1,t)=t\sum_{j=1}^{i+1}j\lambda_{j}p(i-j+1,t)
\end{equation*}
which reduces to
\begin{equation}\label{nlek}
p(n,t)=\frac{t}{n}\sum_{j=1}^{n}j\lambda_{j}p(n-j,t),\ \ 1\le n\le k-1.
\end{equation}
Again on equating the coefficients of $u^i$ for $i\ge k-1$, we get
\begin{equation*}
(i+1)p(i+1,t)=t\sum_{j=1}^{k}j\lambda_{j}p(i-j+1,t)
\end{equation*}
which reduces to 
\begin{equation}\label{ngek}
p(n,t)=\frac{t}{n}\sum_{j=1}^{k}j\lambda_{j}p(n-j,t),\ \ n\ge k.
\end{equation}
Finally, the result follows on combining (\ref{nlek}) and (\ref{ngek}).
\end{proof}
\begin{proposition}\label{martingale}
	The process $\{M(t)-\sum_{j=1}^{k}j\lambda_{j}t\}_{t\geq0}$ is a martingale with respect to natural filtration $\mathscr{F}_{t}=\sigma\left(M(s), s\le t\right)$.
\end{proposition}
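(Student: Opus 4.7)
The plan is to verify the three standard requirements for the martingale property: adaptedness, integrability, and the conditional expectation identity, leveraging the fact (already noted just before equation (\ref{ch})) that the GCP $\{M(t)\}_{t\ge 0}$ is a L\'evy process.

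Adaptedness of $M(t)-\sum_{j=1}^{k}j\lambda_{j}t$ with respect to $\mathscr{F}_{t}=\sigma(M(s),s\le t)$ is immediate from the definition of the filtration. For integrability, I would invoke the mean formula (\ref{mean-variance}) at $\alpha=1$ (noting $\Gamma(2)=1$), which gives $\mathbb{E}(M(t))=t\sum_{j=1}^{k}j\lambda_{j}<\infty$, so the centered process has finite expectation for every $t\ge 0$.

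The core step is the conditional expectation identity. For $0\le s\le t$, I would split
\[
\mathbb{E}\!\left(M(t)-\sum_{j=1}^{k}j\lambda_{j}t\,\Big|\,\mathscr{F}_{s}\right)=\mathbb{E}\bigl(M(t)-M(s)\,\big|\,\mathscr{F}_{s}\bigr)+M(s)-\sum_{j=1}^{k}j\lambda_{j}t.
\]
Since the GCP is a L\'evy process, it has independent and stationary increments, so $M(t)-M(s)$ is independent of $\mathscr{F}_{s}$ and equal in distribution to $M(t-s)$. Hence $\mathbb{E}(M(t)-M(s)\mid\mathscr{F}_{s})=\mathbb{E}(M(t-s))=(t-s)\sum_{j=1}^{k}j\lambda_{j}$, and substituting back yields $M(s)-\sum_{j=1}^{k}j\lambda_{j}s$, which is the martingale identity.

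There is no real obstacle here; the only subtle point is justifying that $M(t)-M(s)$ is independent of $\mathscr{F}_{s}$ and stationary, but this is exactly the L\'evy property already established via the compound Poisson representation (\ref{compound}) with $\alpha=1$, together with (\ref{prxi=j}). The mean computation $\mathbb{E}(M(t-s))=(t-s)\sum_{j=1}^{k}j\lambda_{j}$ can alternatively be read off directly from the characteristic function (\ref{ch}) by differentiating at $\xi=0$, which gives an independent sanity check on the constant $\sum_{j=1}^{k}j\lambda_{j}$ appearing in the compensator.
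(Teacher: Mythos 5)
Your proof is correct and follows essentially the same route as the paper's: both use the L\'evy property of the GCP to get independence of increments and then compute $\mathbb{E}(M(t)-M(s)\mid\mathscr{F}_{s})=(t-s)\sum_{j=1}^{k}j\lambda_{j}$, which cancels the compensator. You additionally spell out adaptedness and integrability, which the paper leaves implicit.
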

\begin{proof}
	Let $Q(t)=M(t)-\sum_{j=1}^{k}j\lambda_{j}t$. Note that $M(t)$ has independent increments as it's a L\'evy process. Hence, for $s\le t$, we have 
	\begin{equation*}
	\mathbb{E}\left(Q(t)-Q(s)|\mathscr{F}_{s}\right)=	\mathbb{E}\left(M(t)-M(s)\Big|\mathscr{F}_{s}\right)-\sum_{j=1}^{k}j\lambda_{j}(t-s)=0.
	\end{equation*}
	This completes the proof.
\end{proof}
\begin{lemma}\label{lemma}
The following limiting result holds for GCP:
\begin{equation}\label{limit}
\lim_{t\to\infty}\frac{M(t)}{t}=\sum_{j=1}^{k}j\lambda_{j},\ \ \text{in probability}.
\end{equation}	
\end{lemma}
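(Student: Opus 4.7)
The plan is to prove convergence in probability by Chebyshev's inequality, exploiting the fact that the variance of $M(t)$ is only linear in $t$, so that $M(t)/t$ has vanishing variance while its mean equals the target limit.

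Specifically, I would first specialize the mean and variance formulas in (\ref{mean-variance}) to the case $\alpha=1$. Since $\Gamma(2)=1$ and $\Gamma(3)=2$, the coefficient $R$ collapses to $\bigl(2/\Gamma(3)-1/\Gamma^{2}(2)\bigr)\bigl(\sum_{j=1}^{k}j\lambda_{j}\bigr)^{2}=0$, yielding
\begin{equation*}
\mathbb{E}(M(t))=\sum_{j=1}^{k}j\lambda_{j}\,t,\qquad \operatorname{Var}(M(t))=\sum_{j=1}^{k}j^{2}\lambda_{j}\,t.
\end{equation*}
Hence $\mathbb{E}(M(t)/t)=\sum_{j=1}^{k}j\lambda_{j}$ exactly, which is the purported limit, and $\operatorname{Var}(M(t)/t)=t^{-1}\sum_{j=1}^{k}j^{2}\lambda_{j}\to 0$ as $t\to\infty$.

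Then for any $\varepsilon>0$, Chebyshev's inequality gives
\begin{equation*}
\mathrm{Pr}\!\left\{\left|\frac{M(t)}{t}-\sum_{j=1}^{k}j\lambda_{j}\right|>\varepsilon\right\}\le\frac{\operatorname{Var}(M(t))}{t^{2}\varepsilon^{2}}=\frac{1}{t\varepsilon^{2}}\sum_{j=1}^{k}j^{2}\lambda_{j}\longrightarrow 0,
\end{equation*}
which is exactly (\ref{limit}). There is no real obstacle here; the only thing to check carefully is that the $t^{2\alpha}$ term in the variance indeed vanishes at $\alpha=1$, which is what lets the second-moment argument go through. (Alternatively one could invoke the weak law of large numbers via the compound Poisson representation (\ref{compound}), writing $M(t)/t=(N(t)/t)\cdot(N(t)^{-1}\sum_{i=1}^{N(t)}X_{i})$ and using that $N(t)/t\to\Lambda$ and $N(t)^{-1}\sum_{i=1}^{N(t)}X_{i}\to\mathbb{E}(X_{1})=\Lambda^{-1}\sum_{j=1}^{k}j\lambda_{j}$ in probability, but the Chebyshev route is shorter and uses only results already established in the paper.)
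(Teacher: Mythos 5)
Your proof is correct, and it takes a genuinely different route from the paper's. The paper specializes the pgf (\ref{pgf}) at $\alpha=1$ to factor $G(u,t)=\prod_{j=1}^{k}e^{t\lambda_{j}(u^{j}-1)}$, reads off the representation $M(t)\stackrel{d}{=}\sum_{j=1}^{k}jN_{j}(t)$ as a weighted sum of $k$ independent Poisson processes with intensities $\lambda_{j}$, and then invokes the strong law $N_{j}(t)/t\to\lambda_{j}$ a.s.\ for each component. You instead run a second-moment argument: specializing (\ref{mean-variance}) at $\alpha=1$ correctly gives $R=0$ (since $2/\Gamma(3)-1/\Gamma^{2}(2)=0$), so $\operatorname{Var}(M(t))=t\sum_{j=1}^{k}j^{2}\lambda_{j}$ grows only linearly while $\mathbb{E}(M(t)/t)$ equals the target constant exactly, and Chebyshev finishes the job. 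Your argument is shorter, entirely self-contained given the moment formulas already recorded in the paper, and sidesteps a mild subtlety in the paper's proof, namely that the limit is being passed through an equality in distribution (harmless here because convergence in probability to a constant depends only on one-dimensional laws, but worth noticing). What the paper's route buys in exchange is the decomposition of the GCP into independent Poisson components, which is of independent interest and is recorded as a remark immediately afterward; your parenthetical alternative via the compound Poisson representation (\ref{compound}) would also work. The one thing to keep in mind is that the Chebyshev route only ever yields convergence in probability, whereas the componentwise SLLN in the paper's argument would give almost sure convergence for the particular version $\sum_{j=1}^{k}jN_{j}(t)$; since the lemma only claims convergence in probability, this costs you nothing.
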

\begin{proof}
On substituting $\alpha=1$ in (\ref{pgf}), we get the pgf of GCP as 
\begin{equation}\label{pgfm}
G(u,t)=\prod_{j=1}^{k}e^{t\lambda_{j}(u^{j}-1)}.
\end{equation}
Thus, the GCP is equal in distribution to the following weighted sum of $k$ independent Poisson process:
\begin{equation*}
M(t)\stackrel{d}{=}\sum_{j=1}^{k}jN_{j}(t).
\end{equation*}	
The weighted Poisson process $N_{1}(t)+2N_{2}(t)+\cdots+kN_{k}(t)$ is studied by Zuo {\it et al.} (2021). Here, $\{N_{j}(t)\}_{t\ge0}$ is a Poisson process with intensity $\lambda_{j}$. Thus, 
\begin{align*}
\lim_{t\to\infty}\frac{M(t)}{t}&\stackrel{d}{=}\sum_{j=1}^{k}j\lim_{t\to\infty}\frac{N_{j}(t)}{t}\\
&=\sum_{j=1}^{k}j\lambda_{j},\ \ \text{in probability},
\end{align*}
where we have used $\lim_{t\to\infty}N_{j}(t)/t=\lambda_j$ almost surely. This completes the proof.
\end{proof}
\begin{remark}
Kataria and Khandakar (2021) studied a limiting case of the GCP, namely, the convoluted Poisson process (CPP). It is denoted by $\{\mathcal{N}_{c}(t)\}_{t\ge0}$. Let $\{\beta_{j}\}_{j\in\mathbb{Z}}$ be a sequence of intensity parameters such that $\beta_{j}=0$ for all $j<0$ and $\beta_{j}>\beta_{j+1}>0$ for all $j\geq0$ with $\lim\limits_{j\to\infty}\beta_{j+1}/\beta_{j}<1$. On taking $\lambda_{j}=\beta_{j-1}-\beta_{j}$, $j\ge 1$ and letting $k\to \infty$ in (\ref{cre}) with $\alpha=1$, the GCP reduces to the CPP. Thus, from (\ref{limit}), the following holds for the CPP:
	\begin{equation*}
	\lim_{t\to\infty}\frac{\mathcal{N}_{c}(t)}{t}=\sum_{j=0}^{\infty}\beta_{j},\ \ \text{in probability}.
	\end{equation*}
\end{remark}
\begin{remark}
	From (\ref{pgfm}), we note that the GCP can be represented as a sum of $k$ independent compound Poisson processes $\{C_{j}(t)\}_{t\geq0}$, $j=1,2,\dots,k$ where 
	\begin{equation*}
	C_j(t)=\sum_{i=1}^{N_j(t)}X_{i}.
	\end{equation*}
Here, $X_i=j$ with probability $1$ and $\{N_j(t)\}_{t\ge0}$ is the Possion process with intensity $\lambda_{j}$.
\end{remark}
\begin{proposition}
The one-dimensional distributions of  GFCP are not infinitely divisible.
\end{proposition}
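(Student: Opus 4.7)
The plan is to argue by contradiction using Lemma \ref{lemma} together with the subordination representation (\ref{mya}). Suppose, contrary to the claim, that $M^\alpha(t)$ is infinitely divisible for every $t>0$.

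First I would establish a weak scaling limit for $M^\alpha(t)/t^\alpha$. Writing $M^\alpha(t)\stackrel{d}{=}M(Y_\alpha(t))$ from (\ref{mya}) and splitting
\[
\frac{M^\alpha(t)}{t^\alpha} \stackrel{d}{=} \frac{M(Y_\alpha(t))}{Y_\alpha(t)}\cdot \frac{Y_\alpha(t)}{t^\alpha},
\]
the first factor converges to $\sum_{j=1}^{k}j\lambda_j$ in probability as $t\to\infty$: this follows from Lemma \ref{lemma} applied to the GCP along the almost-surely divergent paths of $Y_\alpha(t)$, with $M$ independent of $Y_\alpha$. The second factor equals $Y_\alpha(1)$ in distribution for every $t$, by the self-similarity property $Y_\alpha(t)\stackrel{d}{=}t^\alpha Y_\alpha(1)$ of the inverse $\alpha$-stable subordinator. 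Hence Slutsky's lemma yields
\[
\frac{M^\alpha(t)}{t^\alpha} \Longrightarrow \Big(\sum_{j=1}^{k}j\lambda_j\Big)\,Y_\alpha(1), \qquad t\to\infty.
\]

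Next I would invoke two standard closure properties of the class of infinitely divisible distributions: it is closed under multiplication by positive constants, and it is closed under weak limits. Under the contradiction hypothesis, $M^\alpha(t)/t^\alpha$ is infinitely divisible for every $t>0$, and therefore so is its weak limit. That is, the Mittag-Leffler random variable $Y_\alpha(1)$ would have to be infinitely divisible.

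Finally, I would derive the contradiction by ruling out infinite divisibility of $Y_\alpha(1)$. Its Laplace transform is $E_{\alpha,1}(-s)$, so the question is whether $\psi(s):=-\log E_{\alpha,1}(-s)$ is a Bernstein function. Using the series $E_{\alpha,1}(-s)=\sum_{n\ge0}(-s)^n/\Gamma(n\alpha+1)$ and the logarithmic recursion $a_n=b_n-\frac{1}{n}\sum_{k=1}^{n-1}k\,a_k\,b_{n-k}$ with $b_n=1/\Gamma(n\alpha+1)$, the Taylor coefficients of $\psi$ at $0$ are computable; one exhibits a small order $n$ at which $(-1)^{n+1}a_n$ violates the alternating-sign Bernstein requirement, contradicting infinite divisibility of $Y_\alpha(1)$.

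The main obstacle is this last step. For low orders $n\le 4$ the coefficients appear to match the Bernstein signs, so a careful expansion to slightly higher order (and a uniform argument across $\alpha\in(0,1)$) is needed. This is a delicate calculation with gamma functions rather than a conceptual difficulty, and everything else in the argument is essentially a routine application of Lemma \ref{lemma}, representation (\ref{mya}), and the standard stability of infinite divisibility under scaling and weak limits.
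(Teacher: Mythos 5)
Your argument follows the same skeleton as the paper's proof: use the time-change representation $M^{\alpha}(t)\stackrel{d}{=}M(Y_{\alpha}(t))$ together with Lemma \ref{lemma} and the self-similarity of $Y_{\alpha}$ to get the weak limit $M^{\alpha}(t)/t^{\alpha}\Rightarrow \big(\sum_{j=1}^{k}j\lambda_{j}\big)Y_{\alpha}(1)$, then invoke closure of infinite divisibility under positive scaling and weak limits to conclude that $Y_{\alpha}(1)$ would have to be infinitely divisible. That part is sound (your two-factor splitting plus Slutsky is a harmless variant of the paper's route, which applies self-similarity first and then the lemma), and the closure-under-weak-limits step is exactly the Steutel--van Harn result the paper uses.

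The genuine gap is your final step. The entire contradiction rests on the fact that $Y_{\alpha}(1)$ is \emph{not} infinitely divisible, and you do not establish it: the paper simply cites Vellaisamy and Kumar (2018) for this, whereas you propose to detect a sign violation in the Taylor coefficients of $\psi(s)=-\log E_{\alpha,1}(-s)$ and concede that no violation appears through order $4$ and that you have not found one at any order, let alone uniformly in $\alpha\in(0,1)$. As written this is a conjecture, not a proof, and there is no a priori guarantee that a failure of complete monotonicity of $\psi'$ is visible in finitely many Taylor coefficients at $s=0$. The standard (and the cited) argument is of a different nature entirely: it uses tail asymptotics. A non-degenerate, non-Gaussian infinitely divisible law cannot have tails decaying faster than $\exp(-cx\log x)$, while $\Pr\{Y_{\alpha}(1)>x\}$ decays like $\exp\left(-c_{\alpha}x^{1/(1-\alpha)}\right)$ with $1/(1-\alpha)>1$, which is too light; hence $Y_{\alpha}(1)$ is not infinitely divisible. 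You should either cite this known result, as the paper does, or replace your coefficient computation with the tail-estimate argument; without one of these the proof is incomplete at its decisive step.
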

\begin{proof}
On using the self-similarity property of inverse $\alpha$-stable subordinator $\{Y_{\alpha}(t)\}_{t\ge0}$ in (\ref{mya}), we get
\begin{equation*}
M^{\alpha}(t)\overset{d}{=}M\left(t^{\alpha}Y_{\alpha}(1)\right).
\end{equation*}
Thus,
\begin{align*}
\lim_{t\to\infty}\frac{M^{\alpha}(t)}{t^{\alpha}}&\overset{d}{=}\lim_{t\to\infty}\frac{M\left(t^{\alpha}Y_{\alpha}(1)\right)}{t^{\alpha}}\\
&=Y_{\alpha}(1)\lim_{t\to\infty}\frac{M\left(t^{\alpha}Y_{\alpha}(1)\right)}{t^{\alpha}Y_{\alpha}(1)}\\
&\overset{d}{=}Y_{\alpha}(1)\sum_{j=1}^{k}j\lambda_{j},
\end{align*}
where we have used Lemma \ref{lemma} in the last step. 
Now, let  us assume that $M^{\alpha}(t)$ is infinitely divisible. Thus, $M^{\alpha}(t)/t^{\alpha}$ is  infinitely divisible. It follows that  $Y_{\alpha}(1)$ is infinitely divisible as $\lim_{t\to\infty}M^{\alpha}(t)/t^{\alpha}$ is infinitely divisible which follows by using a result on p. 94 of Steutel and van Harn (2004). This leads to a contradiction as $Y_{\alpha}(1)$ is not infinitely divisible (see Vellaisamy and Kumar (2018)).
\end{proof}

Next we obtain the factorial moments of GFCP by using its pgf.
\begin{proposition}\label{p3.3}
The $r$th factorial moment of GFCP, that is, $\psi^\alpha(r,t)=	\mathbb{E}(M^{\alpha}(t)(M^{\alpha}(t)-1)\ldots(M^{\alpha}(t)-r+1))$, $r\ge1$, is given by
\begin{equation}\label{factorial}
\psi^\alpha(r,t)=r!\sum_{n=1}^{r}\frac{t^{n\alpha}}{\Gamma(n\alpha+1)}\underset{m_i\in\mathbb{N}}{\underset{\sum_{i=1}^nm_i=r}{\sum}}\prod_{\ell=1}^{n}\left(\frac{1}{m_\ell!}\sum_{j=1}^{k}(j)_{m_\ell}\lambda_{j}\right),
\end{equation}
where $(j)_{m_\ell}=j(j-1)\ldots(j-m_\ell+1)$ denotes the falling factorial.
\end{proposition}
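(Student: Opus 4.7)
The plan is to obtain the factorial moments from the probability generating function via the elementary identity
\begin{equation*}
\psi^\alpha(r,t) = \frac{\mathrm{d}^r}{\mathrm{d}u^r} G^\alpha(u,t)\bigg|_{u=1}.
\end{equation*}
Using (\ref{pgf}), I would regard $G^\alpha(u,t) = E_{\alpha,1}(h(u))$ as a composition, where
\begin{equation*}
h(u) = t^\alpha \sum_{j=1}^{k} \lambda_j (u^j - 1),
\end{equation*}
so that the $r$-fold differentiation at $u=1$ falls directly into the scope of Fa\`a di Bruno's formula. The key observation is that $h(1) = 0$, hence every derivative of $E_{\alpha,1}$ will be evaluated at the origin.

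Two routine computations feed the formula. From the series (\ref{mit}) with $\delta = \gamma = 1$ one reads off $E_{\alpha,1}^{(n)}(0) = n!/\Gamma(n\alpha+1)$, while differentiating $h$ term by term $i$ times and setting $u = 1$ gives $h^{(i)}(1) = t^\alpha \sum_{j=1}^{k} (j)_i \lambda_j$. I would then invoke Fa\`a di Bruno in its composition (ordered-tuple) form,
\begin{equation*}
\frac{\mathrm{d}^r}{\mathrm{d}u^r} f(g(u)) = r! \sum_{n=1}^{r} \frac{f^{(n)}(g(u))}{n!} \sum_{\substack{m_1 + \cdots + m_n = r \\ m_\ell \geq 1}} \prod_{\ell = 1}^{n} \frac{g^{(m_\ell)}(u)}{m_\ell !},
\end{equation*}
with $f = E_{\alpha,1}$ and $g = h$, evaluated at $u = 1$. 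The factor $n!$ in $E_{\alpha,1}^{(n)}(0)$ cancels the $n!$ in the denominator, and the $n$ copies of $t^\alpha$ coming from the $h^{(m_\ell)}(1)$'s combine into $t^{n\alpha}$; what remains is precisely the right-hand side of (\ref{factorial}), provided one reads the constraint $m_i \in \mathbb{N}$ as $m_i \geq 1$, i.e., as compositions of $r$ into $n$ positive parts.

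The only real obstacle is selecting the correct combinatorial packaging of Fa\`a di Bruno: the standard Bell-polynomial (multiplicity) form produces the same number but in a different shape, whereas the composition form above matches (\ref{factorial}) termwise. As a sanity check, specializing to $r = 1$ leaves only the term $n = 1, m_1 = 1$ and recovers $\psi^\alpha(1,t) = St^\alpha$; for $r = 2$, the term $(n, m_1) = (1, 2)$ contributes $t^\alpha \sum_j (j)_2 \lambda_j/\Gamma(\alpha+1)$ and $(n, m_1, m_2) = (2, 1, 1)$ contributes $2 t^{2\alpha} (\sum_j j \lambda_j)^2 /\Gamma(2\alpha+1)$, which together agree with $\mathbb{E}(M^\alpha(t)^2) - \mathbb{E}(M^\alpha(t))$ obtained from (\ref{mean-variance}).
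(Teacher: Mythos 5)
Your proposal is correct and follows essentially the same route as the paper: differentiate the pgf $G^\alpha(u,t)=E_{\alpha,1}(h(u))$ at $u=1$ via Fa\`a di Bruno's formula, use $h(1)=0$ together with $E_{\alpha,1}^{(n)}(0)=n!/\Gamma(n\alpha+1)$, and expand the inner derivatives over compositions of $r$ into positive parts. The only cosmetic difference is that the paper routes the same computation through the Bell-polynomial form of Johnson (2002) and then separately expands $\frac{\mathrm{d}^r}{\mathrm{d}u^r}(g(u))^n$, whereas you invoke the ordered-composition form directly; the two packagings are equivalent and your sanity checks for $r=1,2$ confirm the agreement with (\ref{mean-variance}).
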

\begin{proof}
On using the $r$th derivative of composition of two functions (see Johnson (2002), Eq. (3.3))	in (\ref{pgf}), we get
\begin{align}\label{ttt}
\psi^\alpha(r,t)&=\frac{\partial^{r}G^{\alpha}(u,t)}{\partial u^{r}}\bigg|_{u=1}\nonumber\\
&=\sum_{n=0}^{r}\frac{1}{n!}E^{(n)}_{\alpha,1}\left(t^{\alpha}\sum_{j=1}^{k}\lambda_{j}(u^{j}-1)\right)	\left.B_{r,n}\left(t^{\alpha}\sum_{j=1}^{k}\lambda_{j}(u^{j}-1)\right)\right|_{u=1},
\end{align}
where
\begin{align}\label{mkgtrr4543t}
B_{r,n}&\left(t^{\alpha}\sum_{j=1}^{k}\lambda_{j}(u^{j}-1)\right)\Bigg|_{u=1}\nonumber\\
&=\sum_{m=0}^{n}\frac{n!}{m!(n-m)!}\left(-t^{\alpha}\sum_{j=1}^{k}\lambda_{j}(u^{j}-1)\right)^{n-m}\frac{\mathrm{d}^{r}}{\mathrm{d}u
^{^{r}}}\left(t^{\alpha}\sum_{j=1}^{k}\lambda_{j}(u^{j}-1)\right)^{m}\Bigg|_{u=1}\nonumber\\
&=t^{n\alpha }\frac{\mathrm{d}^{r}}{\mathrm{d}u^{^{r}}}\left(\sum_{j=1}^{k}\lambda_{j}(u^{j}-1)\right)^{n}\Bigg|_{u=1}.
\end{align}
From (\ref{re}), we get
\begin{align}\label{ppp}
E^{(n)}_{\alpha,1}\left(t^{\alpha}\sum_{j=1}^{k}\lambda_{j}(u^{j}-1)\right)\Bigg|_{u=1}&=n!E^{n+1}_{\alpha,n\alpha+1}\left(t^{\alpha}\sum_{j=1}^{k}\lambda_{j}(u^{j}-1)\right)\Bigg|_{u=1}\nonumber\\
&=\frac{n!}{\Gamma(n\alpha+1)}.
\end{align}
Now, by using the following result (see Johnson (2002), Eq. (3.6))
\begin{equation*}
\frac{\mathrm{d}^{r}}{\mathrm{d}w^{^{r}}}(g(w))^{n}=\underset{m_i\in\mathbb{N}_0}{\underset{m_{1}+m_{2}+\dots+m_{n}=r}{\sum}}\frac{r!}{m_1!m_2!\ldots m_n!}g^{(m_{1})}(w)g^{(m_{2})}(w)\dots g^{(m_{n})}(w),
\end{equation*}
we get		
\begin{align}\label{ccc}
\frac{\mathrm{d}^{r}}{\mathrm{d}u^{^{r}}}\left(\sum_{j=1}^{k}\lambda_{j}(u^{j}-1)\right)^{n}\Bigg|_{u=1}&=r!\underset{m_i\in\mathbb{N}_0}{\underset{\sum_{i=1}^nm_i=r}{\sum}}\prod_{\ell=1}^{n}\frac{1}{m_\ell!}\frac{\mathrm{d}^{m_{\ell}}}{\mathrm{d}u^{{m_{\ell}}}}\left(\sum_{j=1}^{k}\lambda_{j}(u^{j}-1)\right)\Bigg|_{u=1}\nonumber\\
&=r!\underset{m_i\in\mathbb{N}}{\underset{\sum_{i=1}^nm_i=r}{\sum}}\prod_{\ell=1}^{n}\frac{1}{m_\ell!}\sum_{j=1}^{k}(j)_{m_\ell}\lambda_{j}.
\end{align}	
The right hand side of (\ref{ccc}) vanishes for $n=0$. The proof follows on substituting (\ref{mkgtrr4543t}) and (\ref{ppp}) in (\ref{ttt}) and then using (\ref{ccc}).	
\end{proof}
\begin{remark}
On substituting $k=1$  in (\ref{factorial}), we get
\begin{equation*}
\psi^\alpha(r,t)=r!\sum_{n=1}^{r}\frac{t^{n\alpha}}{\Gamma(n\alpha+1)}\underset{m_i\in\mathbb{N}}{\underset{\sum_{i=1}^nm_i=r}{\sum}}\prod_{\ell=1}^{n}\left(\frac{1}{m_\ell!}(1)_{m_\ell}\lambda_{1}\right)=\frac{r!(\lambda_{1}t^{\alpha})^{r}}{\Gamma(r\alpha+1)},
\end{equation*}
which is the $r$th factorial moment of TFPP (see Beghin and Orsingher (2009), Eq. (2.9)).
\end{remark}
\begin{remark}
Di Crescenzo {\it et al.} (2016) give an expression for the $r$th moment $\mu^\alpha(r,t)=\mathbb{E}\left((M^{\alpha}(t))^{r}\right)$, $r\ge1$ of GFCP. Here, we give an alternate expression for it as follows:
\begin{equation*}
\mu^\alpha(r,t)=r!\sum_{n=1}^{r}\frac{t^{n\alpha}}{\Gamma(n\alpha+1)}\underset{m_i\in\mathbb{N}}{\underset{\sum_{i=1}^nm_i=r}{\sum}}\prod_{\ell=1}^{n}\left(\frac{1}{m_\ell!}\sum_{j=1}^{k}j^{m_\ell}\lambda_{j}\right),
\end{equation*}
whose proof follows along the similar lines to that of  Proposition \ref{p3.3}.
\end{remark}
\subsection{Dependence structure of the GFCP and its increments}
Here, we show that the GFCP has LRD property whereas its increments exhibits the SRD property.
  
The LRD and SRD properties for a non-stationary stochastic process $\{X(t)\}_{t\geq0}$ are defined as follows (see Maheshwari and Vellaisamy (2016)):
\begin{definition}
	Let $s>0$ be fixed and $\{X(t)\}_{t\ge0}$ be a stochastic process whose correlation function satisfies
	\begin{equation*}
	\operatorname{Corr}(X(s),X(t))\sim c(s)t^{-\theta},\  \text{as}\ t\rightarrow\infty,
	\end{equation*}
for some $c(s)>0$. The process $\{X(t)\}_{t\ge0}$ is said to exhibit the LRD property if $\theta\in(0,1)$ and the SRD property if $\theta\in(1,2)$.
\end{definition}
We use Theorem 2.1 of Leonenko {\it et al.} (2014) to obtain the  covariance of GFCP as follows: Let $0<s\le t$. Then,
 \begin{align}
\operatorname{Cov}\left(M^{\alpha}(s),M^{\alpha}(t)\right)&=\operatorname{Var}\left(M(1)\right)\mathbb{E}(Y_{\alpha}(s))+\left(\mathbb{E}(M(1))\right)^{2}\operatorname{Cov}\left(Y_{\alpha}(s),Y_{\alpha}(t)\right)\nonumber\\
&=Ts^{\alpha}+\left(\sum_{j=1}^{k}j\lambda_{j}\right)^{2}\operatorname{Cov}\left(Y_{\alpha}(s),Y_{\alpha}(t)\right),\label{covfrd11}
\end{align}
where we have used $(\ref{mean-variance})$ with $\alpha=1$ and (\ref{xswswa3}) in the last step. 

Now, on using (\ref{asi}) in (\ref{covfrd11}), we obtain
\begin{equation}\label{covzt}
\operatorname{Cov}\left(M^{\alpha}(s),M^{\alpha}(t)\right)\sim Ts^{\alpha}+ S^{2}\left(\alpha s^{2\alpha}B(\alpha,\alpha+1)-\frac{\alpha^{2}}{(\alpha+1)}\frac{s^{\alpha+1}}{t^{1-\alpha}}\right) \ \ \mathrm{as}\  t\to\infty.
\end{equation}
\begin{remark}
The mean and variance of the GFCP are obtained by Di Crescenzo {\it et al.} (2016). Alternatively, these can be obtained from Theorem 2.1 of Leonenko {\it et al.} (2014).
\end{remark}
\begin{theorem}
The GFCP exhibits the LRD property.
\end{theorem}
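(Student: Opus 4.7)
The plan is to read off the large-$t$ behavior of the correlation $\operatorname{Corr}(M^{\alpha}(s),M^{\alpha}(t))$ directly from the ingredients already assembled just above the theorem, and to identify the decay exponent $\theta$ to place it inside the interval $(0,1)$ required by the LRD definition.

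First I would fix $s>0$ and start from the asymptotic covariance formula (\ref{covzt}). As $t\to\infty$, the $s^{\alpha+1}/t^{1-\alpha}$ term vanishes, so
\begin{equation*}
\operatorname{Cov}\left(M^{\alpha}(s),M^{\alpha}(t)\right)\sim Ts^{\alpha}+S^{2}\alpha s^{2\alpha}B(\alpha,\alpha+1)\eqqcolon c_{0}(s),
\end{equation*}
where $c_{0}(s)>0$ since $T>0$ and $S^{2}\alpha B(\alpha,\alpha+1)>0$. Thus the covariance is bounded below by a strictly positive constant in $t$, which is exactly the ``persistence'' feature one expects for LRD.

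Next I would use (\ref{mean-variance}) to compute the variance. Since $\operatorname{Var}(M^{\alpha}(t))=Rt^{2\alpha}+Tt^{\alpha}$ with $R>0$, we have $\operatorname{Var}(M^{\alpha}(t))\sim Rt^{2\alpha}$ as $t\to\infty$, so $\sqrt{\operatorname{Var}(M^{\alpha}(t))}\sim \sqrt{R}\,t^{\alpha}$. Putting the two pieces together,
\begin{equation*}
\operatorname{Corr}\left(M^{\alpha}(s),M^{\alpha}(t)\right)=\frac{\operatorname{Cov}\left(M^{\alpha}(s),M^{\alpha}(t)\right)}{\sqrt{\operatorname{Var}(M^{\alpha}(s))\operatorname{Var}(M^{\alpha}(t))}}\sim \frac{c_{0}(s)}{\sqrt{R\operatorname{Var}(M^{\alpha}(s))}}\,t^{-\alpha},\ \ t\to\infty.
\end{equation*}
Setting $c(s)=c_{0}(s)/\sqrt{R\operatorname{Var}(M^{\alpha}(s))}>0$ and $\theta=\alpha$, the correlation decays like $c(s)t^{-\theta}$, and since $0<\alpha<1$ we have $\theta\in(0,1)$, which by the stated definition is the LRD property.

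The argument is essentially a bookkeeping exercise once (\ref{covzt}) and (\ref{mean-variance}) are in hand, so there is no real obstacle. The one point requiring a small check is the positivity of $c_{0}(s)$—which holds because both summands are strictly positive—and the implicit assumption $0<\alpha<1$ (the genuinely fractional regime; the boundary case $\alpha=1$ is the L\'evy GCP, which by independent increments does not fit the LRD framework and is excluded from the claim).
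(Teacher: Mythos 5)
Your argument is correct and is essentially the paper's own proof: both read off $\operatorname{Cov}\left(M^{\alpha}(s),M^{\alpha}(t)\right)\to Ts^{\alpha}+S^{2}\alpha s^{2\alpha}B(\alpha,\alpha+1)$ from (\ref{covzt}), divide by $\sqrt{\operatorname{Var}\left(M^{\alpha}(s)\right)}\sqrt{Rt^{2\alpha}+Tt^{\alpha}}\sim\sqrt{R\operatorname{Var}\left(M^{\alpha}(s)\right)}\,t^{\alpha}$, and conclude $\theta=\alpha\in(0,1)$, your constant matching the paper's $c_{0}(s)$ after the identity $\alpha B(\alpha,\alpha+1)=\Gamma^{2}(\alpha+1)/\Gamma(2\alpha+1)$. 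The only slip is your closing parenthetical: the GCP ($\alpha=1$) is not excluded because it ``does not fit the LRD framework''---independent increments give $\operatorname{Corr}\left(M(s),M(t)\right)=\sqrt{s/t}\sim\sqrt{s}\,t^{-1/2}$, i.e.\ $\theta=1/2\in(0,1)$, and the paper explicitly remarks that the GCP also exhibits LRD.
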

\begin{proof}
Using (\ref{mean-variance}) and (\ref{covzt}), we get the following for fixed $s>0$ and large $t$:
\begin{align*}
\operatorname{Corr}\left(M^{\alpha}(s),M^{\alpha}(t)\right)
&\sim\frac{Ts^{\alpha}+ S^{2}\left(\alpha s^{2\alpha}B(\alpha,\alpha+1)-\frac{\alpha^{2}}{(\alpha+1)}\frac{s^{\alpha+1}}{t^{1-\alpha}}\right)}{\sqrt{\operatorname{ Var}\left(M^{\alpha}(s)\right)}\sqrt{Rt^{2\alpha}+Tt^{\alpha}}}\\
&\sim c_0(s)t^{-\alpha},
\end{align*}
where
\begin{equation*}
c_0(s)=\frac{\Gamma(2\alpha+1)Ts^{\alpha}+\left(\sum_{j=1}^{k}j\lambda_{j}\right)^{2}s^{2\alpha}}{\Gamma(2\alpha+1)\sqrt{\operatorname{ Var}\left(M^{\alpha}(s)\right)R}}.
\end{equation*}
As $0<\alpha<1$, the result follows.
\end{proof}
Similarly, it can be shown that the GCP exhibits the LRD property.

For a fixed $h>0$, the increment process of GFCP is defined as  
\begin{equation*}
Z^{\alpha}_{h}(t)\coloneqq M^{\alpha}(t+h)-M^{\alpha}(t),\ \ t\geq0.
\end{equation*}
\begin{theorem}\label{varbgfff}
The increment process $\{Z^{\alpha}_{h}(t)\}_{t\ge0}$ has the SRD property.
\end{theorem}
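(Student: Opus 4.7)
The plan is to establish the prescribed asymptotic decay rate of $\operatorname{Corr}(Z_h^\alpha(s),Z_h^\alpha(t))$ as $t\to\infty$ by separately analyzing the numerator and denominator, then verifying that the resulting exponent lies in the SRD window $(1,2)$.

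First I would compute the covariance of increments by bilinearity:
\begin{equation*}
\operatorname{Cov}(Z_h^\alpha(s),Z_h^\alpha(t))=\operatorname{Cov}(M^\alpha(s{+}h),M^\alpha(t{+}h))-\operatorname{Cov}(M^\alpha(s{+}h),M^\alpha(t))-\operatorname{Cov}(M^\alpha(s),M^\alpha(t{+}h))+\operatorname{Cov}(M^\alpha(s),M^\alpha(t)).
\end{equation*}
For each of the four terms I would substitute the large-$t$ asymptotic (\ref{covzt}) (applied with the appropriate second argument, either $t$ or $t+h$, which both tend to infinity). The $t$-independent pieces, namely the contributions $Ts^\alpha$, $T(s+h)^\alpha$ and $S^2\alpha B(\alpha,\alpha+1)$ times $s^{2\alpha}$ or $(s+h)^{2\alpha}$, cancel pairwise in the double difference. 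What survives is
\begin{equation*}
\operatorname{Cov}(Z_h^\alpha(s),Z_h^\alpha(t))\sim -\frac{S^2\alpha^2}{\alpha+1}\bigl[(s+h)^{\alpha+1}-s^{\alpha+1}\bigr]\Bigl[(t+h)^{\alpha-1}-t^{\alpha-1}\Bigr].
\end{equation*}
A first-order expansion of the function $u\mapsto u^{\alpha-1}$ then gives $(t+h)^{\alpha-1}-t^{\alpha-1}\sim (\alpha-1)h\,t^{\alpha-2}$, so that
\begin{equation*}
\operatorname{Cov}(Z_h^\alpha(s),Z_h^\alpha(t))\sim \frac{S^2\alpha^2(1-\alpha)h}{\alpha+1}\bigl[(s+h)^{\alpha+1}-s^{\alpha+1}\bigr]\,t^{\alpha-2}.
\end{equation*}

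Next I would compute the large-$t$ behaviour of $\operatorname{Var}(Z_h^\alpha(t))$. Using the subordination representation (\ref{mya}) and the total variance decomposition, together with $\mathbb{E}(M(u))=(\sum j\lambda_j)u$ and $\operatorname{Var}(M(u))=(\sum j^2\lambda_j)u$,
\begin{equation*}
\operatorname{Var}(Z_h^\alpha(t))=T\bigl[(t+h)^\alpha-t^\alpha\bigr]+\Bigl(\sum_{j=1}^k j\lambda_j\Bigr)^{2}\operatorname{Var}\bigl(Y_\alpha(t+h)-Y_\alpha(t)\bigr).
\end{equation*}
The first summand behaves like $T\alpha h\,t^{\alpha-1}$. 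For the second, I would write $\operatorname{Var}(Y_\alpha(t+h)-Y_\alpha(t))$ as the sum $\operatorname{Var}(Y_\alpha(t+h))+\operatorname{Var}(Y_\alpha(t))-2\operatorname{Cov}(Y_\alpha(t),Y_\alpha(t+h))$, insert the exact formula (\ref{covvdsw}) and expand the incomplete beta function $B(\alpha,\alpha+1;t/(t+h))$ around $u=1$ by the substitution $v=1-u$; this yields $B(\alpha,\alpha+1)-(h/(t+h))^{\alpha+1}/(\alpha+1)+\mathrm{O}((h/t)^{\alpha+2})$. After the $t^{2\alpha}$ contributions $-\alpha^2h^2 t^{2\alpha-2}$ are confirmed to be of lower order than the $t^{\alpha-1}$ contribution, one obtains $\operatorname{Var}(Y_\alpha(t+h)-Y_\alpha(t))\sim \tfrac{2\alpha h^{\alpha+1}}{(\alpha+1)\Gamma^{2}(\alpha+1)}\,t^{\alpha-1}$, and so
\begin{equation*}
\operatorname{Var}(Z_h^\alpha(t))\sim \Bigl[T\alpha h+\frac{2\alpha h^{\alpha+1}}{\alpha+1}S^{2}\Bigr]t^{\alpha-1}=:C_{2}(h,\alpha)\,t^{\alpha-1}.
\end{equation*}

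Finally, since $\operatorname{Var}(Z_h^\alpha(s))$ is a fixed positive constant (in $t$), the correlation satisfies
\begin{equation*}
\operatorname{Corr}(Z_h^\alpha(s),Z_h^\alpha(t))\sim c(s)\,t^{\alpha-2}\cdot t^{-(\alpha-1)/2}=c(s)\,t^{-(3-\alpha)/2}
\end{equation*}
for a positive constant $c(s)$ depending on $s,h,\alpha$. Since $0<\alpha<1$, the exponent $\theta=(3-\alpha)/2\in(1,3/2)\subset(1,2)$, which by the definition of SRD completes the proof.

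The main obstacle is the variance-of-increments estimate: one must show that the dominant $t^{2\alpha}$ pieces in $V(t+h)+V(t)-2\operatorname{Cov}(Y_\alpha(t),Y_\alpha(t+h))$ cancel exactly (as they must, since at $\alpha=1$ the variance is $0$), and pin down the correct next-order term coming from the incomplete beta function. Everything else is either the algebraic cancellation inherited from (\ref{covzt}) or a routine application of Leonenko-type conditioning.
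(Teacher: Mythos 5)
Your proposal is correct and, for the covariance of the increments, is exactly the paper's argument: the same four-term bilinear decomposition, the same substitution of the asymptotic (\ref{covzt}) into each term, the same pairwise cancellation leaving $\operatorname{Cov}(Z^{\alpha}_{h}(s),Z^{\alpha}_{h}(t))\sim\frac{\alpha^{2}h(1-\alpha)}{\alpha+1}\left((s+h)^{\alpha+1}-s^{\alpha+1}\right)S^{2}t^{\alpha-2}$, and the same final exponent $\theta=(3-\alpha)/2\in(1,3/2)$. Where you genuinely diverge is the variance of the increment. The paper writes $\operatorname{Var}(Z^{\alpha}_{h}(t))=\operatorname{Var}(M^{\alpha}(t+h))+\operatorname{Var}(M^{\alpha}(t))-2\operatorname{Cov}(M^{\alpha}(t),M^{\alpha}(t+h))$, inserts the exact formula (\ref{23}), and then simply replaces $B(\alpha,\alpha+1;t/(t+h))$ by $B(\alpha,\alpha+1)$, arriving at $\operatorname{Var}(Z^{\alpha}_{h}(t))\sim\alpha hTt^{\alpha-1}$. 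You instead condition on the inverse subordinator and expand the incomplete beta function one order further, which picks up the additional contribution $\frac{2\alpha h^{\alpha+1}}{\alpha+1}S^{2}t^{\alpha-1}$ --- a term of the \emph{same} order $t^{\alpha-1}$ as the one the paper retains, so your constant $C_{2}(h,\alpha)=T\alpha h+\frac{2\alpha h^{\alpha+1}}{\alpha+1}S^{2}$ is the sharper asymptotic (the paper's zeroth-order replacement of the incomplete beta function silently drops a same-order piece). Since both versions give $\operatorname{Var}(Z^{\alpha}_{h}(t))\asymp t^{\alpha-1}$ with a positive constant, the discrepancy only changes $c_1(s)$, not the decay exponent, and the SRD conclusion is unaffected. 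Two shared caveats, at the same level of rigor as the paper: subtracting four asymptotic equivalences in the covariance step tacitly assumes the error terms in (\ref{covzt}) are uniformly of lower order than the surviving $t^{\alpha-2}$ term, and your conditional-variance decomposition uses the time-change representation (\ref{mya}) at the level of processes rather than one-dimensional marginals --- but the paper's own appeal to Theorem 2.1 of Leonenko {\it et al.} (2014) rests on the same identification.
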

\begin{proof}
The proof follows similar lines to that of Theorem 1, Maheshwari and Vellaisamy (2016) and Theorem 5.5, Kataria and Khandakar (2021).  For the sake of completeness, we give a brief outline of the proof.

Let $s>0$ be fixed such that $0< s+h\le t$. Then,
\begin{align}\label{covz}
\operatorname{Cov}(Z^{\alpha}_{h}(s),Z^{\alpha}_{h}(t))&=\operatorname{Cov}\left(M^{\alpha}(s+h),M^{\alpha}(t+h)\right)+\operatorname{Cov}\left(M^{\alpha}(s),M^{\alpha}(t)\right)\nonumber\\
&\ \ \ \  -\operatorname{Cov}\left(M^{\alpha}(s+h),M^{\alpha}(t)\right)-\operatorname{Cov}\left(M^{\alpha}(s),M^{\alpha}(t+h)\right).
\end{align}
 From (\ref{covzt}) and (\ref{covz}), we get the following for large $t$:
\begin{equation}\label{covzi}
\operatorname{Cov}(Z^{\alpha}_{h}(s),Z^{\alpha}_{h}(t))\sim\frac{\alpha^{2}h(1-\alpha)}{\alpha+1}\left((s+h)^{\alpha+1}-s^{\alpha+1}\right)S^{2}t^{\alpha-2}.
\end{equation}

On using (\ref{covvdsw}) in (\ref{covfrd11}), we get
\begin{equation}\label{23}
\operatorname{Cov}\left(M^{\alpha}(t),M^{\alpha}(t+h)\right)= Tt^{\alpha}+ S^{2}\left(\alpha t^{2\alpha}B(\alpha,\alpha+1)+F(\alpha;t,t+h)\right),
\end{equation}
where $F(\alpha;t,t+h)=\alpha(t+h)^{2\alpha}B(\alpha,\alpha+1;t/(t+h))-\left(t(t+h)\right)^{\alpha}$.

Also,
\begin{align}\label{varzi}
\operatorname{Var}(Z^{\alpha}_{h}(t))&=\operatorname{Var}\left(M^{\alpha}(t+h)\right)+\operatorname{Var}\left(M^{\alpha}(t)\right)-2\operatorname{Cov}\left(M^{\alpha}(t),M^{\alpha}(t+h)\right)\nonumber\\
&\sim \alpha h Tt^{\alpha-1},\ \ \mathrm{as}\ \ t\to\infty,
\end{align}
where we have used (\ref{mean-variance}), (\ref{23}) and the result $B(\alpha,\alpha+1;t/(t+h))\sim B(\alpha,\alpha+1)$ for large $t$ in the last step. Finally, from (\ref{covzi}) and (\ref{varzi}), we get
\begin{equation*}
\operatorname{Corr}(Z^{\alpha}_{h}(s),Z^{\alpha}_{h}(t))\sim c_1(s)t^{-(3-\alpha)/2},\ \ \mathrm{as}\ t\rightarrow\infty,
\end{equation*}
where
\begin{equation*}
c_1(s)=\frac{\alpha^{2}h(1-\alpha)\left((s+h)^{\alpha+1}-s^{\alpha+1}\right)S^{2}}{(\alpha+1)\sqrt{\operatorname{Var}(Z^{\alpha}_{h}(s))}\sqrt{\alpha h T}}.
\end{equation*}
Thus, the process $\{Z^{\alpha}_{h}(t)\}_{t\ge0}$ exhibits the SRD property as $1<(3-\alpha)/2<3/2$.
\end{proof}

\subsection{GFCP as a scaling limit of a CTRW}
Consider a renewal process
\begin{equation*}
	R(t)=\max\{n\geq 0:W_1+W_2+\cdots+W_n\leq t\},
\end{equation*}
where $W_1,W_2,\ldots,W_n$ are iid waiting times such that $\mathrm{Pr}\{W_n>t\}=t^{-\alpha}L(t)$, $0<\alpha<1$ and $L$ is a slowly varying function. Then, there exist $b_n>0$ such that
\begin{equation*}
	b_n(W_1+W_2+\dots+W_n)\Rightarrow D_{\alpha}(1),
\end{equation*}
where $\Rightarrow$ denotes convergence in distribution. It means that $W_1$ belongs to the strict domain of attraction of some stable law $D_{\alpha}(1)$.  Let $b(t)=b_{[t]}$.  It can be shown that there exists a regularly varying function
$\tilde b$ with index $\alpha$ such that $1/b(\tilde b(c))\sim c$, as $c\to\infty$ (see Meerschaert {\it et al.} (2011)). 

Let $S^{(p)}(n)=\sum_{i=1}^{n}Z_{i}$ where $Z_{i}=X_{i}V^{(p)},\ i\ge1$. Here, $\{X_{i}\}_{i\ge1}$ is a sequence of iid random variables whose distribution is given by (\ref{prxi=j}) and $V^{(p)}$ is a Bernoulli random variable independent of  $\{X_{i}\}_{i\ge1}$ such that $\mathrm{Pr}\{V^{(p)}=1\}=p$ and $\mathrm{Pr}\{V^{(p)}=0\}=1-p$. Note that $S^{(p)}(R(t))$ is a CTRW with heavy-tailed
waiting times and jumps distributed according to the law of $Z_{1}$. 

The following result holds for the GFCP: 
\begin{equation}\label{CTRWtoCFPP}
\big\{S^{(1/\tilde b(c))}([\Lambda R(ct)])\big\}_{t\geq 0}\Rightarrow \left\{M^\alpha(t))\right\}_{t\geq 0}
\end{equation}
as $c\to\infty$ in the $M_1$ topology on $D\left([0,\infty),\mathbb{R}\right)$. That is, the GFCP is the scaling limit of a CTRW. The result given in (\ref{CTRWtoCFPP}) can be proved along the similar lines to that of Theorem 4.8 of Kataria and Khandakar (2021). Thus, the proof is omitted.
\section{Some special cases of the GFCP}\label{section4}
In this section, we discuss few special cases of the GFCP. It is known that the TFPP and the CFPP are particular and limiting cases of the GFCP, respectively (see Di Crescenzo {\it et al.} (2016), Kataria and Khandakar (2021)). Its other special cases are as follow:
\subsection{Poisson process of order $k$ and its fractional version}
The Poisson process of order $k$ (PPoK) $\{N^{k}(t)\}_{t\ge0}$ is a compound Poisson process introduced and studied by Kostadinova and Minkova (2013). It is defined as  
\begin{equation*}
	N^{k}(t)\coloneqq\sum_{i=1}^{N(t)}X_{i},
\end{equation*}
where $\{X_{i}\}_{i\ge1}$ is a sequence of iid discrete uniform random variables such that 
\begin{equation*}
\mathrm{Pr}\{X_{1}=j\}=\frac{1}{k}, \ \ j=1,2,\dots,k.
\end{equation*}
The sequence $\{X_{i}\}_{i\ge1}$ is independent of the Poisson process $\{N(t)\}_{t\ge0}$ whose intensity parameter is $k\lambda$. For $k=1$, the PPoK reduces to the Poisson process. Recently, a fractional version of the PPoK, namely, the time fractional Poisson process of order $k$ (TFPPoK) $\{N^{k}_{\alpha}(t)\}_{t\ge0}$ is studied by Gupta and Kumar (2021), Kadankova {\it et al.} (2021). It is defined as 
\begin{equation*}
	N^{k}_{\alpha}(t)\coloneqq N^{k}(Y_{\alpha}(t)),\ \ 0<\alpha<1,
\end{equation*}
where the PPoK $\{N^{k}(t)\}_{t\ge0}$ and the inverse $\alpha$-stable subordinator $\{Y_{\alpha}(t)\}_{t\ge0}$ are independent of each other.

On substituting $\lambda_{j}=\lambda$ for all $j=1,2,\dots,k$ in (\ref{cre}), we get the governing system of fractional differential equations for the state probabilities of TFPPoK (see Gupta and Kumar (2021), Eq. (30); Kadankova {\it et al.} (2021), Eqs. (18)-(19)). Further, on taking $\alpha=1$, we get the governing system of differential equations for the state probabilities of PPoK (see Kostadinova and Minkova (2013), Eq. (9)). Here, $\Lambda=k\lambda$. Thus, the PPoK and its fractional version TFPPoK are particular cases of the GFCP.

The pmf $p^{k}_{\alpha}(n,t)=\mathrm{Pr}\{N^{k}_{\alpha}(t)=n\}$ of TFPPoK is obtained by Gupta and Kumar (2021), Kadankova {\it et al.} (2021). Its alternate form can be obtained by substituting $\lambda_{j}=\lambda$ for $j=1,2,\dots,k$ in (\ref{pmfmt}), and it is given by
\begin{equation*}
p^{k}_{\alpha}(n,t)
=\sum_{r=0}^{n}\, \sum_{\substack{i_{1}+i_{2}+\ldots+i_{k}=r\\ i_{1}+2i_{2}+\ldots+ki_{k} =n}}\binom{r}{i_{1},i_{2},\ldots,i_{k}}\, (\lambda t^{\alpha})^{r}\, E_{\alpha,r\alpha +1 }^{r+1 }(-k\lambda t^{\alpha }),\ \ n\ge0.
\end{equation*}
Similarly, the $r$th factorial moment of TFPPoK can be obtained from Proposition \ref{p3.3}. Moreover, the characterstic function of PPoK (see Gupta {\it et al.} (2020), Eq. (10)) and a limiting result for PPoK (see Sengar {\it et al.} (2020), Eq. (9)) follow from (\ref{ch}) and (\ref{limit}), respectively.

\subsection{ P\'olya-Aeppli process of order $k$ and its fractional version}
The P\'olya-Aeppli process of order $k$ (PAPoK) $\{\hat{N}^{k}(t)\}_{t\ge0}$ is a compound Poisson process  studied by Chukova and Minkova (2015). It is defined as  
\begin{equation*}
	\hat{N}^{k}(t)\coloneqq\sum_{i=1}^{N(t)}X_{i},
\end{equation*}
where $\{X_{i}\}_{i\ge1}$ is a sequence of iid truncated geometrically distributed random variables with the following pmf: 
\begin{equation*}
	\mathrm{Pr}\{X_{1}=j\}=\frac{1-\rho}{1-\rho^{k}}\rho^{j-1}, \ \ j=1,2,\dots,k,
\end{equation*}
where $0\leq\rho<1$. The sequence $\{X_{i}\}_{i\ge1}$ is independent of the Poisson process $\{N(t)\}_{t\ge0}$ whose intensity parameter is $\lambda$. Recently, a fractional version of the PAPoK, namely, the fractional P\'olya-Aeppli process of order $k$ (FPAPoK) $\{\hat{N}^{k}_{\alpha}(t)\}_{t\ge0}$ is introduced by  Kadankova {\it et al.} (2021). It is defined as 
\begin{equation*}
	\hat{N}^{k}_{\alpha}(t)\coloneqq \hat{N}^{k}(Y_{\alpha}(t)),\ \ 0<\alpha<1,
\end{equation*}
where $\{\hat{N}^{k}(t)\}_{t\ge0}$ and $\{Y_{\alpha}(t)\}_{t\ge0}$ are independent of each other.

On substituting $\lambda_{j}=\lambda(1-\rho)\rho^{j-1}/(1-\rho^{k})$ for all $j=1,2,\dots,k$ in (\ref{cre}), we get the governing system of fractional differential equations for the state probabilities of FPAPoK (see Kadankova {\it et al.} (2021), Eqs. (37)-(38)). Further, on taking $\alpha=1$, we get the governing system of differential equations for the state probabilities of PAPoK (see Chukova and Minkova (2015
), Eq. (9)). Here, $\Lambda=\lambda$. Thus, the PAPoK and its fractional version FPAPoK are particular cases of the GFCP. From Lemma \ref{lemma}, we get the following limiting result for the PAPoK:
	\begin{equation*}
		\lim_{t\to\infty}\frac{\hat{N}^{k}(t)}{t}=\frac{\lambda}{1-\rho^{k}}\left(1+\rho+\dots+\rho^{k-1}-k\rho^{k}\right),\ \ \text{in probability}.
	\end{equation*}	
It is important to note that the pmf of FPAPoK is not known. On substituting  $\lambda_{j}=\lambda(1-\rho)\rho^{j-1}/(1-\rho^{k})$ for $j=1,2,\dots,k$ in (\ref{pmfmt}), the pmf $\hat{p}^{k}_{\alpha}(n,t)=\mathrm{Pr}\{\hat{N}^{k}_{\alpha}(t)=n\}$ of FPAPoK can be obtained as follows:
\begin{equation*}
\hat{p}^{k}_{\alpha}(n,t)
=\sum_{r=0}^{n}\, \sum_{\substack{i_{1}+i_{2}+\ldots+i_{k}=r\\ i_{1}+2i_{2}+\ldots+ki_{k} =n}}\binom{r}{i_{1},i_{2},\ldots,i_{k}}\, \left(\frac{\lambda(1-\rho)t^{\alpha}}{\rho(1-\rho^{k})}\right)^{r}\, \rho^{n}E_{\alpha,r\alpha +1 }^{r+1 }(-\lambda t^{\alpha }),\ \ n\ge0
\end{equation*}
and its pgf can be obtained from (\ref{pgf}) in the following form:
\begin{equation*}
\hat{G}^{k}_{\alpha}(u,t)=E_{\alpha,1}\left(\frac{\lambda(1-\rho)}{(1-\rho^{k})}\sum_{j=1}^{k}\rho^{j-1}(u^{j}-1)t^{\alpha}\right),\ \ |u|\le1.
\end{equation*}
Further, $\alpha=1$ gives the pmf $\hat{p}^{k}(n,t)=\mathrm{Pr}\{\hat{N}^{k}(t)=n\}$ of PAPoK as follows:
\begin{equation*}
\hat{p}^{k}(n,t)
=\sum_{r=0}^{n}\, 
\sum_{\substack{i_{1}+i_{2}+\ldots+i_{k}=r\\ i_{1}+2i_{2}+\ldots+ki_{k} =n}}\left(\frac{\lambda(1-\rho)t}{\rho(1-\rho^{k})}\right)^{r}
\frac{\rho^{n}e^{-\lambda t}}
{i_{1}!\,i_{2}!\,\ldots\,i_{k}!},\ \ n\ge0.
\end{equation*}
From (\ref{qap675}), it follows that its L\'evy measure is
\begin{equation*}
\Pi(\mathrm{d}x)=\frac{\lambda(1-\rho)}{(1-\rho^{k})}\sum_{j=1}^{k}\rho^{j-1}\delta_{j}\mathrm{d}x.
\end{equation*}

\subsection{P\'olya-Aeppli process and its fractional version}
The P\'olya-Aeppli process  (PAP) $\{\hat{N}(t)\}_{t\ge0}$ is a compound Poisson process  studied by Chukova and Minkova (2013). It is defined as  
\begin{equation*}
\hat{N}(t)\coloneqq\sum_{i=1}^{N(t)}X_{i},
\end{equation*}
where $\{X_{i}\}_{i\ge1}$ is a sequence of iid  geometrically distributed random variables such that 
\begin{equation*}
\mathrm{Pr}\{X_{1}=j\}=(1-\rho)\rho^{j-1},\ \ j\ge1,
\end{equation*}
where $0\leq\rho<1$. The sequence $\{X_{i}\}_{i\ge1}$ is independent of the Poisson process $\{N(t)\}_{t\ge0}$ whose intensity parameter is $\lambda$. Beghin and Macci (2014) introduced a fractional version of the PAP, namely, the fractional P\'olya-Aeppli process (FPAP) $\{\hat{N}_{\alpha}(t)\}_{t\ge0}$. It is defined as 
\begin{equation*}
\hat{N}_{\alpha}(t)\coloneqq \hat{N}(Y_{\alpha}(t)),\ \ 0<\alpha<1,
\end{equation*}
where $\{\hat{N}(t)\}_{t\ge0}$ and $\{Y_{\alpha}(t)\}_{t\ge0}$ are independent of each other.

On letting $k\to\infty$ in (\ref{cre}) with $\lambda_{j}=\lambda(1-\rho)\rho^{j-1}$ for all $j\ge1$ the system (\ref{cre}) reduces to the governing system of differential equations for the state probabilities of FPAP (see Beghin and Macci (2014), Eq. (19)).  Further, on taking $\alpha=1$, we get the governing system of differential equations for the state probabilities of PAP (see Chukova and Minkova (2013), Eq. (10)). Here, $\Lambda=\lambda$. Thus, the PAP and its fractional version FPAP are  obtained as a limiting process of GFCP. From Lemma \ref{lemma}, we get the following limiting result for the PAP:
\begin{equation*}
\lim_{t\to\infty}\frac{\hat{N}(t)}{t}=\frac{\lambda}{1-\rho},\ \ \text{in probability}.
\end{equation*}	

\subsection{Negative binomial process and its fractional version}
The negative binomial process  (NBP) $\{\bar{N}(t)\}_{t\ge0}$ is a compound Poisson process  studied by Kozubowski and Podg\'orski (2009). It is defined as  
\begin{equation*}
\bar{N}(t)\coloneqq\sum_{i=1}^{N(t)}X_{i},
\end{equation*}
where $\{X_{i}\}_{i\ge1}$ is a sequence of iid random variables with discrete logarithmic distribution such that    
\begin{equation*}
\mathrm{Pr}\{X_{1}=j\}=\frac{(1-p)^{j}}{j\ln(1/p)},\ \ j\ge1,
\end{equation*}
where $0<p<1$. The sequence $\{X_{i}\}_{i\ge1}$ is independent of the Poisson process $\{N(t)\}_{t\ge0}$ whose intensity parameter is $\ln(1/p)$. Beghin and Macci (2014) studied a fractional version of the NBP, namely, the fractional negative binomial process (FNBP) which we denote by $\{\bar{N}_{\alpha}(t)\}_{t\ge0}$. It is defined as 
\begin{equation*}
\bar{N}_{\alpha}(t)\coloneqq \bar{N}(Y_{\alpha}(t)),\ \ 0<\alpha<1,
\end{equation*}
where $\{\bar{N}(t)\}_{t\ge0}$ and $\{Y_{\alpha}(t)\}_{t\ge0}$ are independent of each other.

On letting $k\to\infty$ in (\ref{cre}) with $\lambda_{j}=(1-p)^j/j$ for all $j\ge1$ the system (\ref{cre}) reduces to the governing system of differential equations for the state probabilities of FNBP (see Beghin (2015), Eq. (66)). Here, $\Lambda=\ln(1/p)$. Thus, the FNBP is  obtained as a limiting process of GFCP. From Lemma \ref{lemma}, we get the following limiting result for the NBP:
\begin{equation*}
\lim_{t\to\infty}\frac{\bar{N}(t)}{t}=\frac{1-p}{p},\ \ \text{in probability}.
\end{equation*}	

\section{An application to risk theory}\label{section5}
Consider the following risk model with GCP as the counting process: 
\begin{equation}\label{risk}
X(t)=ct-\sum_{j=1}^{M(t)}Z_{j}, \ \ t\ge0,
\end{equation}
where $c>0$ denotes the constant premium rate. Here, $\{Z_{j}\}_{j\ge1}$ is the sequence of positive iid random variables with commmon distribution $F$. The $Z_j$'s represent the claim sizes and these are independent of the GCP.

Let $\mu=\mathbb{E}(Z_{j})$. The relative safety loading factor $\eta$ for the risk model (\ref{risk}) is given by 
\begin{equation*}
\eta=\frac{\mathbb{E}(X(t))}{\mathbb{E}\left(\sum_{j=1}^{M(t)}Z_{j}\right)}=\frac{c}{\mu\sum_{j=1}^{k}j\lambda_{j}}-1.
\end{equation*}
Hence, $c>\mu\sum_{j=1}^{k}j\lambda_{j}$ holds when the safety loading factor is positive. Let $u\geq0$ denote the initial capital and $\{U(t)\}_{t\ge0}$ be the surplus process where $U(t)=u+X(t)$.

Let $\tau$ denote the time to ruin of an insurance company. So, 
\begin{equation*}
\tau=\inf\{t>0:U(t)<0\},
\end{equation*}
 where $\inf\phi=\infty$ and the ruin probability is given by $\psi(u)=\mathrm{Pr}\{\tau<\infty\}$. Let $ G(u,y)$ be the joint probability that the ruin occurs in finite time and the deficit $D=|U(\tau)|$ at the time of ruin is not more than $y\ge0$, that is,
 \begin{equation}\label{wsqa11}
 G(u,y)=\mathrm{Pr}\{\tau<\infty, D\le y\}.
 \end{equation} 
Note that 
 \begin{equation*}
 \lim\limits_{y\to \infty}G(u,y)=\psi(u).
 \end{equation*}
 
The transition probabilities of GCP are given by (see Di Crescenzo {\it et al.} (2016), Section 2)
\begin{equation}\label{pure}
\mathrm{Pr}\{M(t+h)=n|M(t)=m\}=\begin{cases*}
1-\Lambda h+o(h),\quad n=m,\\
\lambda_{j}h+o(h), \quad n=m+j,\ \ j=1,2,\dots,k,\\
o(h),\ \ n>m+k,
\end{cases*}
\end{equation}
where $\Lambda=\lambda_{1}+\lambda_{2}+\dots+\lambda_{k}$. 

Let $\bar{u}=u+ch$ and $F^{*j}(\cdot)$ be the distribution of $Z_{1}+Z_{2}+\dots+Z_{j}$ for all $j=1,2,\dots,k$. Then, from (\ref{wsqa11}) and (\ref{pure}), we get
\begin{equation*}
G(u,y)=(1-\Lambda h)G(\bar{u},y)+o(h)+\sum_{j=1}^{k}\lambda_{j}h\left(F^{*j}(\bar{u}+y)-F^{*j}(\bar{u})+\int_{0}^{\bar{u}}G(\bar{u}-x,y)\mathrm{d}F^{*j}(x)\right)
\end{equation*}
which can be rewritten as follows:
\begin{align}
\frac{G(\bar{u},y)-G(u,y)}{ch}&=\frac{\Lambda}{c}G(\bar{u},y)+\frac{o(h)}{h}\nonumber\\
&\ \ -\frac{1}{c}\sum_{j=1}^{k}\lambda_{j}\left(F^{*j}(\bar{u}+y)-F^{*j}(\bar{u})+\int_{0}^{\bar{u}}G(\bar{u}-x,y)\mathrm{d}F^{*j}(x)\right).\label{gvfddds21}
\end{align}
Let 
\begin{equation*}
H(x)=\frac{1}{\Lambda}\sum_{j=1}^{k}\lambda_{j}F^{*j}(x)
\end{equation*}
be the mixture distribution whose mixture components are the distributions $F^{*j}(\cdot)$'s of the aggregated claims $Z_{1}+Z_{2}+\dots+Z_{j}$. It follows that $H(0)=0$, $H(\infty)=1$. On letting $h\to 0$ in (\ref{gvfddds21}), we get
\begin{equation}\label{(u,y)}
\frac{\partial G(u,y)}{\partial u}=\frac{\Lambda}{c}\left(G(u,y)+H(u)-H(u+y)-\int_{0}^{u}G(u-x,y)\mathrm{d}H(x)\right).
\end{equation}
Further, on letting $y\to\infty$, we get
\begin{equation*}
	\frac{\mathrm{d}}{\mathrm{d}u}\psi(u)=\frac{\Lambda}{c}\left(\psi(u)+H(u)-1-\int_{0}^{u}\psi(u-x)\mathrm{d}H(x)\right).
	\end{equation*}
\begin{theorem}
	The function $G(0,y)$ is given by
	\begin{equation}\label{G(0,y)}
G(0,y)=	\frac{\Lambda}{c}\int_{0}^{y}(1-H(u))\mathrm{d}u.
	\end{equation}
\end{theorem}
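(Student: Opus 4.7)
The plan is to integrate the governing equation for $\partial G(u,y)/\partial u$ (the equation labeled just before the theorem) over $u \in [0,\infty)$ and exploit the boundary behavior at infinity together with Fubini's theorem. Writing $L(y) \coloneqq \int_0^\infty G(u,y)\,du$, the left-hand side yields
\begin{equation*}
\int_0^\infty \frac{\partial G(u,y)}{\partial u}\,du = G(\infty,y) - G(0,y) = -G(0,y),
\end{equation*}
where I use $G(\infty,y) = 0$, which follows from $\psi(\infty)=0$ under the positive safety loading condition $c > \mu \sum_{j=1}^k j\lambda_j$, since $G(u,y) \le \psi(u)$.

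For the right-hand side, the key observation is that the linear term $\int_0^\infty G(u,y)\,du = L(y)$ is cancelled by the convolution term. Indeed, by swapping the order of integration (Fubini, all integrands nonnegative),
\begin{equation*}
\int_0^\infty \int_0^u G(u-x,y)\,\mathrm{d}H(x)\,\mathrm{d}u = \int_0^\infty \int_x^\infty G(u-x,y)\,\mathrm{d}u\,\mathrm{d}H(x) = L(y)\int_0^\infty \mathrm{d}H(x) = L(y),
\end{equation*}
since $H$ is a proper distribution. Thus the $L(y)$ terms cancel and we are left with
\begin{equation*}
-G(0,y) = \frac{\Lambda}{c}\int_0^\infty (H(u) - H(u+y))\,\mathrm{d}u.
\end{equation*}

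The remaining step is to evaluate this integral. Writing $H(u)-H(u+y) = (1-H(u+y)) - (1-H(u))$ and using the change of variable $v = u+y$ in the first piece,
\begin{equation*}
\int_0^\infty (1-H(u+y))\,\mathrm{d}u - \int_0^\infty (1-H(u))\,\mathrm{d}u = \int_y^\infty (1-H(v))\,\mathrm{d}v - \int_0^\infty (1-H(v))\,\mathrm{d}v = -\int_0^y (1-H(v))\,\mathrm{d}v.
\end{equation*}
Combining this with the previous display yields the claimed formula for $G(0,y)$.

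The main technical obstacle is justifying the two boundary/integrability facts used above: the vanishing of $G(u,y)$ as $u\to\infty$, and the finiteness of the tail integral $\int_0^\infty (1-H(v))\,\mathrm{d}v = \mu\sum_{j=1}^k j\lambda_j/\Lambda$, which is exactly the finite-mean condition coming from the mixture structure of $H$ and the assumption $\mathbb{E}(Z_1) = \mu < \infty$. Both are standard consequences of the positive safety loading hypothesis, so once they are invoked the rest of the argument is a clean Fubini computation; no further appeal to the detailed structure of the GCP is needed beyond what was already used in deriving the governing equation.
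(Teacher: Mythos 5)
Your proposal is correct and follows essentially the same route as the paper: integrate the governing equation for $\partial G(u,y)/\partial u$ over $u\in[0,\infty)$, use $G(\infty,y)=0$, cancel the linear term against the convolution term, and evaluate the remaining integral by a change of variable. You merely spell out the Fubini cancellation and the tail-integral identity that the paper compresses into ``the change of variable yields,'' which is a welcome but not substantively different elaboration.
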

\begin{proof}
	Integrating (\ref{(u,y)}) with respect to $u$ from $0$ to $\infty$ and using $G(\infty,y)=0$, we get
	\begin{equation*}
	-G(0,y)=\frac{\Lambda}{c}\left(\int_{0}^{\infty}G(u,y)\mathrm{d}u+\int_{0}^{\infty}(H(u)-H(u+y))\mathrm{d}u-\int_{0}^{\infty}\int_{0}^{u}G(u-x,y)\mathrm{d}H(x)\mathrm{d}u\right).
	\end{equation*}
	Thus, the change of variable yields 
	\begin{align*}
	G(0,y)=\frac{\Lambda}{c}\int_{0}^{\infty}(H(u+y)-H(u))\mathrm{d}u=\frac{\Lambda}{c}\int_{0}^{y}(1-H(u))\mathrm{d}u.
	\end{align*}
	This completes the proof.
\end{proof}
\begin{corollary}
The ruin probability for zero initial capital is given by
\begin{equation*}
\psi(0)=\frac{\mu}{c}\sum_{j=1}^{k}j\lambda_{j}.
\end{equation*}
\end{corollary}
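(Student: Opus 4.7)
The plan is to take the limit $y\to\infty$ in the formula \eqref{G(0,y)} just established and evaluate the resulting improper integral by recognizing it as the mean of the mixture distribution $H$.

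First I would note that, by definition of $G(u,y)$ in \eqref{wsqa11} and the remark $\lim_{y\to\infty}G(u,y)=\psi(u)$ recorded above the theorem, one has $\psi(0)=\lim_{y\to\infty}G(0,y)$. Substituting \eqref{G(0,y)} yields
\begin{equation*}
\psi(0)=\frac{\Lambda}{c}\int_{0}^{\infty}\bigl(1-H(u)\bigr)\,\mathrm{d}u,
\end{equation*}
which is precisely $\Lambda/c$ times the mean of a nonnegative random variable with distribution $H$, via the standard tail-integral formula $\mathbb{E}(W)=\int_0^\infty \mathrm{Pr}\{W>u\}\,\mathrm{d}u$.

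Next I would compute this mean using the mixture representation $H(x)=\Lambda^{-1}\sum_{j=1}^{k}\lambda_j F^{*j}(x)$. Since $F^{*j}$ is the distribution of $Z_1+\cdots+Z_j$ with $\mathbb{E}(Z_i)=\mu$, its mean is $j\mu$, so
\begin{equation*}
\int_{0}^{\infty}\bigl(1-H(u)\bigr)\,\mathrm{d}u=\frac{1}{\Lambda}\sum_{j=1}^{k}\lambda_j\int_{0}^{\infty}\bigl(1-F^{*j}(u)\bigr)\,\mathrm{d}u=\frac{1}{\Lambda}\sum_{j=1}^{k}j\mu\lambda_j.
\end{equation*}
Plugging this back gives $\psi(0)=(\Lambda/c)\cdot(\mu/\Lambda)\sum_{j=1}^{k}j\lambda_j=(\mu/c)\sum_{j=1}^{k}j\lambda_j$, as claimed.

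There is essentially no obstacle here; the only point requiring mild care is justifying the interchange of limit and sum (or Fubini in the tail-integral identity for a mixture), which is immediate because all integrands are nonnegative and monotone convergence applies. Thus the corollary is a direct consequence of the theorem, the mixture structure of $H$, and the elementary formula for the mean of a nonnegative distribution in terms of its survival function.
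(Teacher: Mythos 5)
Your proposal is correct and follows essentially the same route as the paper: let $y\to\infty$ in \eqref{G(0,y)}, identify $\int_0^\infty(1-H(u))\,\mathrm{d}u$ as the mean of a random variable with distribution $H$, and compute that mean as $\frac{\mu}{\Lambda}\sum_{j=1}^{k}j\lambda_j$ from the mixture structure. The only difference is that you spell out the tail-integral computation for each $F^{*j}$ explicitly, which the paper leaves implicit.
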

\begin{proof}
On taking limit $y\to \infty$ in (\ref{G(0,y)}), we get
	\begin{equation*}
	\psi(0)=\frac{\Lambda}{c}\int_{0}^{\infty}(1-H(u))\mathrm{d}u.
	\end{equation*}
	Let $X$ be a random variable with distribution function $H(x)$. Then,
	\begin{equation*}
	\mathbb{E}(X)=\frac{\mu}{\Lambda}\sum_{j=1}^{k}j\lambda_{j}.
	\end{equation*}
Using the fact that $\mathbb{E}(X)=\displaystyle\int_{0}^{\infty}(1-H(u))\mathrm{d}u$, we get the required result.
\end{proof}
\section{Concluding remarks}
We obtain some additional results and study new properties for the GFCP, a fractional counting process introduced by Di Crescenzo {\it et al.} (2016). Its $r$th factorial moment and the covariance are derived. We establish the LRD and SRD properties for it and its increments, respectively. It is shown that the GFCP is a scaling limit of some CTRW. A particular case of the GFCP, namely, the GCP is discussed in detail for which we obtain a martingale result and establish a recurrence relation for its pmf. We obtain a limiting result for the GCP using which we prove that the one-dimensional distributions of GFCP are not infinitely divisible. It is shown that many known counting processes recently introduced and studied by several researchers such as the PPoK, PAP, PAPoK, NBP and their fractional versions are special cases of the GFCP. An application of the GCP to ruin theory is discussed where it is used as a counting process.

\end{document}